\documentclass[a4paper,11pt]{article}
\usepackage[utf8]{inputenc}
\usepackage[T1]{fontenc}
\usepackage[english]{babel}
\usepackage[nomath]{lmodern}
\usepackage[margin=1in]{geometry}
\usepackage{microtype, tikz}

\usetikzlibrary{arrows.meta,positioning,shapes.geometric,fit}

\usepackage{graphicx}

\usepackage{amsthm,mathtools,amssymb}
\usepackage{hyperref}
\usepackage[capitalise]{cleveref}
\hypersetup{
  colorlinks=true,
  linkcolor=blue,
  citecolor=blue,
  urlcolor=blue,
  pdftitle={Short Proofs for the Minimum Number of Determined Circles},
  pdfauthor={Stijn Cambie, Denis Paliy, Rafal Wrona}
}

\newtheorem{lemma}{Lemma}

\newcommand{\cL}{\mathcal L}

\definecolor{richblue}{HTML}{2166AC}
\definecolor{richred}{HTML}{B2182B}
\definecolor{richgreen}{HTML}{1B7837}
\definecolor{richviolet}{HTML}{762A83}
\tikzset{
  rich1/.style={richblue,line width=.75pt},
  rich2/.style={richred,line width=.75pt,dashed},
  rich3/.style={richgreen,line width=.8pt,
    dash pattern=on 1.4pt off 1.1pt},
  rich4/.style={richviolet,line width=.75pt,dash dot},
  helper/.style={black!38,line width=.45pt,densely dashed},
  selected/.style={circle,fill=black,inner sep=1.35pt},
  panel/.style={draw=black!25,rounded corners=.8pt},
  every node/.style={font=\scriptsize}
}

\allowdisplaybreaks
\newtheorem{theorem}{Theorem}
\theoremstyle{remark}
\newtheorem*{relatedremark}{Remark}

\newcommand{\eightcoordinates}{%
  \coordinate (A) at (-1.5,{sqrt(7)/2});
  \coordinate (B) at (-.5,{sqrt(7)/2});
  \coordinate (C) at (.5,{sqrt(7)/2});
  \coordinate (D) at (1.5,{sqrt(7)/2});
  \coordinate (E) at ({(1-sqrt(7))/4},{(1+sqrt(7))/4});
  \coordinate (F) at ({-(1+sqrt(7))/4},{(sqrt(7)-1)/4});
  \coordinate (G) at ({(sqrt(7)-1)/4},{(sqrt(7)+1)/4});
  \coordinate (H) at ({(sqrt(7)+1)/4},{(sqrt(7)-1)/4});}

\newcommand{\eightpoints}{%
  \fill[selected] (A) circle (1.2pt) node[above left=-1pt] {$A$};
  \fill[selected] (B) circle (1.2pt) node[above=2pt] {$B$};
  \fill[selected] (C) circle (1.2pt) node[above=2pt] {$C$};
  \fill[selected] (D) circle (1.2pt) node[above right=-1pt] {$D$};
  \fill[selected] (E) circle (1.2pt) node[below left=-1pt] {$E$};
  \fill[selected] (F) circle (1.2pt) node[below left=-1pt] {$F$};
  \fill[selected] (G) circle (1.2pt) node[below right=-1pt] {$G$};
  \fill[selected] (H) circle (1.2pt) node[below right=-1pt] {$H$};}

\title{Short Proofs for the Minimum Number of Determined Circles}
\author{Stijn Cambie\thanks{Department of Computer Science, KU Leuven Campus Kulak-Kortrijk, 8500 Kortrijk, Belgium. Supported by a postdoctoral fellowship by the Research Foundation Flanders (FWO) with grant number 1225224N.}
\and Denis Paliy\and Rafa\l{} Wrona}
\date{11 October 2026}
\begin{document}
\maketitle

\begin{abstract}
We present short proofs for the main parts of the recently completed solution
of Erd\H{o}s Problem~506, which asks for the minimum number of distinct
circles determined by a finite planar point set that is neither
collinear nor concyclic. An inversion argument and a seven-point line
lemma show that eight points determine at least $17$ circles, and an
explicit configuration attains this bound. A single argument combining
Melchior's inequality with Langer's incidence bound establishes the
sharp formula for every $n\ge15$. The proofs give concise alternatives
to longer arguments in the recent literature; the remaining finite
cases are cited separately.
\end{abstract}

\section{Introduction}

The problem studied here goes back to Erd\H{o}s's 1961 collection of
unsolved problems~\cite{Erdos1961}. He asked for the least number of
distinct circles determined by $n$ points in the plane that do not all lie
on one circle. Taken literally, this allows all points to be collinear and
has the trivial answer zero. Following Elliott's treatment~\cite{Elliott},
we use the intended nondegenerate form: the points are neither all
collinear nor all concyclic.

Call such a point set \emph{admissible}. A noncollinear triple determines
a unique circle. Write $C(P)$ for the number of distinct circles determined
by $P$, counting each circle once even when it contains more than three
selected points, and put
\[
 f(n)=\min\{C(P): |P|=n\text{ and }P\text{ is admissible}\},\qquad n\ge4.
\]
The standard construction gives
\begin{equation}\label{eq:standard-upper}
 f(n)\le g(n):=1+\binom{n-1}{2}
 -\left\lfloor\frac{n-1}{2}\right\rfloor.
\end{equation}
Indeed, take a circle $\Gamma$, its centre $O$, and $n-1$ points on
$\Gamma$ arranged in $\lfloor(n-1)/2\rfloor$ antipodal pairs, with one
unmatched point when $n$ is even. Besides $\Gamma$, each nonantipodal pair
determines a circle with $O$, and different pairs give different circles.

We give short proofs of the main cases of the recently
completed classification.
\begin{theorem}\label{thm:main}
The minimum number of circles satisfies
\[
 f(8)=17,\qquad f(n)=g(n)\quad\text{for every }n\ge15.
\]
\end{theorem}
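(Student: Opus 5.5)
The plan is to reduce both parts to statements about lines through inversion. Fix a point $p\in P$ and invert about $p$. Circles through $p$ determined by $P$ correspond bijectively to lines determined by the $n-1$ image points $P'$. Circles avoiding $p$ correspond to circles determined by $P'$. Hence
\[
C(P)\ \ge\ L(P') + (\text{circles of }P\text{ missing }p),
\]
where $L(\cdot)$ counts determined lines. Since $P$ is not concyclic, $P'$ is not collinear for a suitable $p$. The hypothesis that $P$ is not collinear rules out the degenerate situation in which every circle passes through $p$.

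For $n\ge 15$ we choose $p$ carefully and double count over all $p\in P$. Let $t_k$ be the number of circles containing exactly $k$ points of $P$. For each $p$, Melchior's inequality applied to $P'$ gives $\sum_k (3-k)\,t'_k\ge 3$, where $t'_k$ counts lines with $k$ points, unless $P'$ is collinear. Langer's incidence bound (Bojanowski--Langer: for $m$ points with at most $\tfrac23 m$ on a line, $\sum_k k^2 t_k \ldots$, yielding $L\ge$ roughly $m^2/c$) controls configurations where no line of $P'$ is rich. We split into cases by the largest circle (or line) $\Gamma$ of $P$, of size $n-\ell$.

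\begin{itemize}
\item \emph{Large $\Gamma$ ($\ell$ small, e.g.\ $\ell=1$).} We count directly. With one point $O$ off $\Gamma$, the circles through $O$ and two points of $\Gamma$ number at least $\binom{n-1}{2}$ minus the number of pairs collinear with $O$. Those pairs form a matching, so at most $\lfloor (n-1)/2\rfloor$ pairs are lost, and adding $\Gamma$ itself gives $g(n)$. For $\ell\ge2$, the circles through two off-$\Gamma$ points and one $\Gamma$ point already yield more than $g(n)$ once $n\ge15$.
\item \emph{No rich circle or line.} We apply Langer's bound to $P'$ for every $p$. This shows $L(P')\ge c(n-1)^2$ with $c$ large enough, after summing $C(P)\ge \frac1{\,\text{avg}\,}\sum_p L(P'_p)$, using that each circle with $k$ points is counted $k$ times. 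Combined with Melchior to handle the low-order terms, this exceeds $g(n)\approx n^2/2$.
\end{itemize}

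The main obstacle is the intermediate regime, where some circle carries a constant fraction of the points but misses more than one. There I would apply the same double counting restricted to $p\notin\Gamma$ and use that $\Gamma$ becomes a rich line in $P'$. Melchior then forces many ordinary lines, which correspond to circles through exactly three points including $p$. Tuning the threshold in Langer's hypothesis is what fixes $n\ge15$.

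For $f(8)=17$, we show the upper bound by exhibiting the eight-point configuration of the figure coordinates: four collinear points $A,B,C,D$ and four points $E,F,G,H$ on a circle. We then verify by listing circles that exactly $17$ are determined.

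For the lower bound, we invert at a point $p$ and get $7$ points $P'$. A seven-point line lemma states that $7$ noncollinear points determine at least $6$ lines, with equality only for a near-pencil, and otherwise many more (e.g.\ $\ge 9$ unless at least $5$ points are collinear). We take cases on the maximal collinear or concyclic subset of $P$. If some $p$ gives a non-degenerate $P'$, we add the circles missing $p$. Those are lower bounded by applying the same inversion to a second point and using the seven-point lemma again. The near-pencil cases (a circle or line with $6$ or $7$ points) are handled by direct counting, as in the $n\ge15$ large-$\Gamma$ case, which gives $\ge 17$.
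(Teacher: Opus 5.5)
Both halves of your proposal have genuine gaps. For $n\ge15$, the decisive steps are left open, and two of them fail as stated. In the carrier case with $\ell\ge2$, circles through two points off $\Gamma$ and one point of $\Gamma$ come from at most $\binom{\ell}{2}(n-\ell)$ triples. That is $O(n)$ for bounded $\ell$, far below $g(n)\approx n^2/2$. The count that works uses one outside point and two carrier points. Each outside point gives at least $\binom k2-\lfloor k/2\rfloor$ circles, and two outside points share at most $\lfloor k/2\rfloor$ of them, because their common circles use disjoint carrier pairs. This gives $C(P)\ge H_n(k)=1+(n-k)k(3k-n-3)/4$. This function is concave on $[(n+1)/2,n-1]$ and at least $G(n)$ at both endpoints when $n\ge15$ (the left endpoint is where $n\ge15$ enters), so it settles \emph{every} $k>n/2$.

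Your intermediate regime then disappears. If every line and circle has at most $n/2$ points, each inverted set $Q_p$ has at most $n/2-1\le 2(n-1)/3$ collinear points, so Langer's bound applies at every $p$. In that case, $C(P)\ge\frac{1}{\mathrm{avg}}\sum_p L(P'_p)$ is useless without control of circle sizes. Langer plus Cauchy--Schwarz gives only about $m^2/12$ lines per $Q_p$; dividing by the available size bound $n/2$ yields $C(P)\gtrsim n^2/6<g(n)$. Also, $C(P)\ge L(P')+\cdots$ is false, since lines of $P'$ through the centre come from lines of $P$, not circles.

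The missing idea is the exact weighting $C(P)=\sum_p\sum_{\lambda:\,p\notin\lambda}\frac{1}{j_\lambda+1}$ over lines $\lambda$ of $Q_p$, combined with the tangent bound $\frac{1}{j+1}\ge\frac{6-j}{12}$ (equality at $j=2,3$). This turns the weighted sum into $\frac{I+2D}{12}$: Langer bounds the incidences $I$, and Melchior bounds $D=\sum_\lambda(3-j_\lambda)$. Melchior applied to $Q_p\cup\{p\}$ then bounds the weight of the lines through the centre by $D+m-3$. This is Lemma~\ref{lem:pointed}, and it yields the cubic bound $C(P)\ge\frac{n}{36}\bigl(3\lceil (n-1)(n+2)/3\rceil-4n+22\bigr)>g(n)-1$ for all $n\ge15$.

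For $n=8$, the lower-bound sketch does not reach $17$. Your seven-point lemma is misstated: by de Bruijn--Erd\H{o}s, seven noncollinear points span at least seven lines, and the near-pencil gives exactly seven. More importantly, a bound on the number of lines is too weak here. First remove carriers with $k\ge5$ points, including $k=5$, which you omit. Then all blocks have at most four points, and the circles through $p$ number $21-2r_p-s_p$, where $r_p$ counts four-point blocks and $s_p$ counts line blocks through $p$. Your ``$\ge9$ lines'' statement is exactly $r_p\le6$. Together with $s_p\le3$, averaging over $p$ gives only $C\ge12$. Even the paper's exact identity $C=56-3b-L$ gives only $C\ge12$ from these two local bounds, so your unspecified second-inversion step would need new input.

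That input is the equality structure in Lemma~\ref{lem:seven}. With six three-point lines, the ordinary pairs form a triangle and any two three-point lines meet inside the set. So the inversion centre lies on at most one ordinary line and at most one three-point line of $Q_p$. Hence $r_p=6$ forces $s_p\le2$, giving $r_p+s_p\le8$ and $4b+3L\le64$. This yields $C\ge17$ for $b\le11$, while $b=12$ still needs the separate observation $L\le3$. Your upper bound is fine once you use the actual coordinates and verify the $6+11+6$ block list.
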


For $n=8$, our proof uses a seven-point incidence lemma after
inversion. For $n\ge15$, a weighted-line inequality gives a single
argument for the whole range. Both approaches were already developed
in related forms in Wrona's manuscript~\cite{Wrona}: a block-counting
and inversion proof for $n=8$, and a uniform incidence proof for
$n\ge15$. The present note streamlines the arguments; related
methods occur in the parallel work of Wang~\cite{Wang} and
Paliy~\cite{PaliyCorrected}. In particular, our proof for $n=8$
needs no computer-assisted classification of incidence types,
and the proof for $n\ge15$ uses one argument throughout.

Elliott's work initiated the study of circle counts~\cite{Elliott}.
Jucovi\v{c} established the small values for $4\le n\le6$ and
showed $f(7)\ge10$~\cite{Jucovic}. Purdy and Smith corrected the
large-$n$ argument of Elliott, proving $f(n)=g(n)$ for
$n\ge394$~\cite{PurdySmith}. The recent investigations
\cite{Wrona,Wang,PaliyCorrected} close the remaining gap.

The complete classification, combining the classical
cases $4\le n\le6$~\cite{Jucovic} with recent contributions
\cite{WhiteClaude,Wrona,Wang,PaliyCorrected},
is
\begin{equation}\label{eq:full-classification}
 f(n)=\begin{cases}
  8,&n=6,\\
  11,&n=7,\\
  17,&n=8,\\
  g(n),&n=4,5\text{ or }n\ge9.
 \end{cases}
\end{equation}
The cases $n=7$ and $9\le n\le14$ are not proved here; see
\cite{WhiteClaude,Wrona,Wang,PaliyCorrected}. The latter require
additional finite-case analysis and computational verification.
Thus \cref{thm:main} is proved below, whereas the full
classification~\eqref{eq:full-classification} also uses those results.

Wrona's repository~\cite{Wrona} contains a Lean~4 formalization of
the full classification (theorem \texttt{Erdos506.erdos\_506}); its
published axiom audit reports only the standard axioms
\texttt{propext}, \texttt{Classical.choice}, and \texttt{Quot.sound}.
The accompanying Lean files of~\cite{Wang} formalize an arithmetic
layer of that proof. Neither formalization is needed to follow the
mathematical arguments below.
Extremal configurations are illustrated in~\cref{fig:optimalConf}.

\begin{figure}[t]
    \centering
    
\begin{tikzpicture}[
  point/.style={circle,fill=black,inner sep=1.7pt},
  rich/.style={line width=.75pt},
  auxiliary/.style={
    line width=.65pt,
    densely dashed,
    gray!75
  },
  every node/.style={font=\small}, scale=0.8
]
  \pgfmathsetmacro{\q}{sqrt(3)}
  \pgfmathsetmacro{\u}{sqrt(7)}

  \begin{scope}[xshift=1.7cm]
    \coordinate (A6) at (1,0);
    \coordinate (B6) at (-.5,{\q/2});
    \coordinate (C6) at (-.5,{-\q/2});
    \coordinate (D6) at (1,\q);
    \coordinate (E6) at (-2,0);
    \coordinate (F6) at (1,-\q);

    \draw[rich] (D6)--(E6)--(F6)--cycle;

    \foreach \P/\lab/\pos in {
      A6/A/right,
      B6/B/above left,
      C6/C/below left,
      D6/D/above,
      E6/E/left,
      F6/F/below
    }
      \node[point,label=\pos:{\(\lab\)}] at (\P) {};

    \node[font=\normalsize] at (-.5,-2.25) {\(n=6\)};
  \end{scope}

  \begin{scope}[xshift=6.4cm]
    \coordinate (A) at (1,0);
    \coordinate (B) at (-.5,{\q/2});
    \coordinate (C) at (-.5,{-\q/2});
    \coordinate (D) at (1,\q);
    \coordinate (E) at (-2,0);
    \coordinate (F) at (1,-\q);
    \coordinate (O) at (0,0);

    \draw[rich] (D)--(E)--(F)--cycle;
    \draw[auxiliary] (D)--(C) (E)--(A) (F)--(B);

    \foreach \P/\pos in {
      A/right,
      B/above left,
      C/below left,
      D/above,
      E/left,
      F/below,
      O/above right
    }
      \node[point,label=\pos:{\(\P\)}] at (\P) {};

    \node[font=\normalsize] at (-.5,-2.25) {\(n=7\)};
  \end{scope}

  \begin{scope}[
    xshift=10.6cm,
    yshift=-.75cm,
    scale=1.38,
    every node/.append style={transform shape}
  ]
    \coordinate (P0) at (-1.5,{\u/2});
    \coordinate (P1) at (-.5,{\u/2});
    \coordinate (P2) at (.5,{\u/2});
    \coordinate (P3) at (1.5,{\u/2});
    \coordinate (P4) at ({(1-\u)/4},{(1+\u)/4});
    \coordinate (P5) at ({-(1+\u)/4},{(\u-1)/4});
    \coordinate (P6) at ({(\u-1)/4},{(\u+1)/4});
    \coordinate (P7) at ({(\u+1)/4},{(\u-1)/4});

    \draw[rich] (P0)--(P3);
    \draw[auxiliary] (P0)--(P7) (P3)--(P5);

    \foreach \P/\lab in {P0/A,P1/B,P2/C,P3/D}
      \node[point,label=above:{\(\lab\)}] at (\P) {};

    \foreach \P/\lab/\pos in {
      P4/E/left,
      P5/F/below,
      P6/G/right,
      P7/H/below
    }
      \node[point,label=\pos:{\(\lab\)}] at (\P) {};

    \node[font=\normalsize] at (0,-.55) {\(n=8\)};
  \end{scope}

\end{tikzpicture}\quad 
\begin{tikzpicture}
  \clip (-1.95,-1.95) rectangle (1.95,1.95);
  \draw[rich1] (0,0) circle[radius=1.35];
  \foreach \a in {0,45,90,135}
    \draw[helper] (\a:1.35)--({\a+180}:1.35);
  \foreach \a/\lab/\where in {
    90/A/above,45/B/above right,0/C/right,315/D/below right,
    270/E/below,225/F/below left,180/G/left,135/H/above left}
    \fill[selected] (\a:1.35) circle (2.5pt)
      node[\where=2pt] {\large \(\lab\)};
  \fill[selected] (0,0) circle (2.5pt)
    node[below right=-2pt] {\large $O$};
\end{tikzpicture}
\caption{Optimal configurations for $n\in\{6,7,8\}$
(for $n=6$, an equilateral triangle and the midpoints of its sides;
for $n=7$, the same together with its centre), and an optimal
configuration for $n\ge9$ (shown for $n=9$).}
    \label{fig:optimalConf}
\end{figure}

\begin{relatedremark}[Initial parallel investigations]
\Cref{thm:main} was obtained in the summer of 2026 by (at least) five investigations 
within a short period: those of Rafa\l{} Wrona, Liyan Wang, Stijn Cambie,
Denis Paliy, and the forum (on~\cite{Forum506}) user \texttt{mzn}. These investigations were independent, assuming that their AI use would not influence each other (which is unclear). This paper summarises the main insights from those.\\
The block-counting/inversion approach for eight points and the uniform
incidence approach for $n\ge15$ also appear in~\cite{Wrona}.
The seven-point Lemma~\ref{lem:seven} and the pointed
Lemma~\ref{lem:pointed} yield short versions of these arguments.
The carrier estimate, inversion count, and Melchior argument after
adjoining the inversion centre have counterparts in
\cite[Lemmas~2.1, 2.2 and 6.1]{Wang}; the proof in~\cite{Wang} treats
$n=15$ and $n=16$ separately. Paliy's parallel work~\cite{PaliyCorrected}
developed an exact computational and SAT-based approach to the finite
cases, e.g. $n=12,13$, with reproducible verification files.
\end{relatedremark}

\section{Proof for \texorpdfstring{$n=8$}{n=8}}\label{sec:n8}

\subsection{A seven-point lemma}

A line containing exactly two selected points is \emph{ordinary}.
The Sylvester--Gallai theorem states that every finite noncollinear real point set has such a line.
For completeness, choose a selected point $x$ and a determined line
$\ell$ not through $x$ for which the positive distance $d$ is minimal,
and let $h$ be the perpendicular foot from $x$ to $\ell$.
If $\ell$ contains three selected points, choose $A,B$ on the same
closed ray from $h$, with $A$ nearer. Then
$|AB|\le|hB|<|xB|$, and so
$\operatorname{dist}(A,xB)=d|AB|/|xB|<d$, a contradiction.

\begin{lemma}\label{lem:seven}
Seven real planar points with no four collinear determine at most six
three-point lines. If there are six, every point outside the set lies
on at most one ordinary line and at most one three-point line.
\end{lemma}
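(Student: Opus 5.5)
The plan is to count pairs and then use the Sylvester--Gallai theorem proved above. Seven points give $\binom72=21$ pairs. Since no four points are collinear, every determined line is either ordinary or a three-point line, and each pair lies on exactly one determined line. A three-point line covers three pairs, so with $t$ three-point lines there are $21-3t$ ordinary lines. If $t\ge7$, there would be no ordinary line (indeed $t=7$ is exactly the Fano plane). This contradicts the Sylvester--Gallai theorem, so $t\le6$.

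Now suppose $t=6$, so there are exactly three ordinary lines. Let $r_i$ be the number of three-point lines through the $i$-th point. Such a line pairs that point with two others, so the point lies on $6-2r_i$ ordinary lines, which is an even number. Hence the graph $H$ of ordinary pairs has $3$ edges and all degrees even. A graph with three edges and all degrees even must be a triangle $xyz$. Consequently $x,y,z$ each lie on two three-point lines, and the remaining four points $a,b,c,d$ each lie on three.

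Next I pin down the structure. The pairs among $x,y,z$ are ordinary, so no three-point line contains two of $x,y,z$. The six three-point lines carry $2+2+2=6$ incidences with $\{x,y,z\}$, so each such line contains exactly one of $x,y,z$ and therefore exactly two of $a,b,c,d$. Since the $\binom42=6$ pairs from $\{a,b,c,d\}$ each lie on one line, the three-point lines are precisely the six sides of the quadrangle $abcd$. The points $x,y,z$ are its three diagonal points: each of them lies on two sides sharing no vertex. (A model is a triangle together with its side midpoints and centroid.)

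The second claim now follows from intersection bookkeeping. The three ordinary lines are $xy$, $yz$, $zx$, and any two of them meet in one of $x,y,z$, which are selected points. So a point outside the set lies on at most one ordinary line. Two sides of the quadrangle meet either in a common vertex among $a,b,c,d$ or, if they are opposite, in a diagonal point $x$, $y$ or $z$. Either way every two three-point lines meet in a selected point, so an outside point lies on at most one three-point line. The only step needing care is the identification of the three-point lines with the sides of the quadrangle $abcd$, which rests on the incidence count above.
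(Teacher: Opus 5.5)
Your proof is correct, and up to the triangle of ordinary pairs it is the paper's argument. That part consists of pair counting plus Sylvester--Gallai for $t\le6$, then the parity of the ordinary degrees $6-2r_i$ forcing a triangle $xyz$ whose sides pairwise meet at selected points. You should also state that the seven points are not collinear, since Sylvester--Gallai needs this; it is immediate from the absence of four collinear points.

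The routes differ only for the three-point lines. You reconstruct the configuration: the six lines are the sides of the quadrangle $abcd$, and $x,y,z$ are its diagonal points. In other words, the incidence structure is the Fano plane with the line $xyz$ deleted. The paper uses only the degree sequence $2,2,2,3,3,3,3$ and the identity $3\binom22+4\binom32=15=\binom62$. Two distinct lines share at most one point, so $\sum_p\binom{r_p}{2}$ counts pairs of three-point lines meeting at a selected point without repetition, and hence all $\binom62$ pairs do.

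The paper's count is shorter and sidesteps the incidence bookkeeping you flagged as the delicate step. Your route gives more, namely a complete description of the incidence structure in the equality case, which the lemma itself does not require.
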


\begin{proof}
The seven points are not collinear, since no four are collinear.
Let $t_2,t_3$ count the ordinary and three-point lines. Pair counting gives
$t_2+3t_3=21$, and the ordinary-line theorem gives $t_2>0$, so $t_3\le6$.

At equality, the graph of ordinary pairs has three edges and only even
degrees: a point on $d$ three-point lines has ordinary degree $6-2d$.
A simple graph with three edges and all degrees even must be a triangle.
Hence the three ordinary edges form a triangle, whose supporting lines meet
pairwise at selected points.

The three-point degrees are therefore $2,2,2,3,3,3,3$. Since
$3\binom22+4\binom32=\binom62$, every pair of three-point lines also
meets at a selected point. Two ordinary lines cannot meet outside the selected set, and neither
can two three-point lines.
\end{proof}

\subsection{The lower bound}

Let $P$ be an admissible eight-point set, and let $C$ be its number of
determined circles. First suppose a line or circle contains exactly
$k\ge5$ selected points, and put $r=8-k$. Each outside point determines
distinct circles with all but at most $\lfloor k/2\rfloor$ pairs on the
carrier; only collinear triples fail (and this cannot occur for a line
carrier). Two such lists overlap in at most $\lfloor k/2\rfloor$ circles,
since the carrier pairs used by common circles are disjoint: otherwise
two distinct circles would share three points. Subtracting pairwise
overlaps and including the carrier circle gives the general estimate
\begin{equation}\label{eq:carrier-count}
 C\ge 1+r\left(\binom{k}{2}-\left\lfloor\frac{k}{2}\right\rfloor\right)
 -\binom{r}{2}\left\lfloor\frac{k}{2}\right\rfloor.
\end{equation}
For a line carrier there is no carrier circle to add, but every pair is
usable, so the same lower bound holds (as $r\lfloor k/2\rfloor\ge1$).
The right-hand side of \eqref{eq:carrier-count} is at least
$19>17$ for $k\in\{5,6,7\}$.
Bounds of this form also appear in the parallel works; see
\cite[Lemma~2.1]{Wang} and~\cite{Wrona,PaliyCorrected,WhiteClaude}.
We may henceforth assume that every line and circle contains at most
four selected points.

A \emph{block} is the full set of selected points on a line or circle
containing at least three of them. Let $b$ count four-point blocks and
$L$ count line blocks. Every triple belongs to exactly one block, so
the total number of blocks is $\binom83-3b$, and
\begin{equation}\label{eq:count}
 C=56-3b-L.
\end{equation}

For $p\in P$, let $r_p$ count the four-point blocks through $p$, and let
$s_p$ count the line blocks through $p$. Invert the other seven points
about $p$. Inversion takes
circles through $p$ to lines avoiding $p$, and preserves lines through
$p$. Thus the images (of the seven other points) have no four collinear, and their three-point
lines correspond precisely to the four-point blocks through $p$.
Lemma~\ref{lem:seven} gives $r_p\le6$.

Always $s_p\le3$, since these lines use disjoint subsets of at least
two other points each. If $r_p=6$, apply Lemma~\ref{lem:seven} to the
inversion center: at most one original three-point line and at most one
original four-point line pass through $p$, so $s_p\le2$. Consequently
$r_p+s_p\le8$ in every case. Summing gives
\begin{equation}\label{eq:bounds}
 4b=\sum_{p\in P}r_p\le48,
 \qquad
 4b+3L\le\sum_{p\in P}(r_p+s_p)\le64.
\end{equation}
For $b\le11$, equations~\eqref{eq:count} and~\eqref{eq:bounds} yield
$
 3C=168-9b-3L\ge104-5b\ge49>48,
$
so $C\ge17$.

If $b=12$, then $r_p=6$ everywhere. Each point lies on at most one
three-point line and at most one four-point line, so there are at most
two lines of either size. Two four-point lines would partition $P$ and
exclude every three-point line. Hence $L\le3$, and
\eqref{eq:count} gives $C=20-L\ge17$.

\subsection{Sharpness}

\begin{proof}[A construction]
Put $u=\sqrt7$ and take the $8$ points with coordinates $(x,y)$ as follows
\[
\begingroup
\setlength{\arraycolsep}{4.1pt}
\renewcommand{\arraystretch}{1.28}
\begin{array}{c|cccccccc}
      &A&B&C&D&E&F&G&H\\ \hline
x&-\tfrac32&-\tfrac12&\tfrac12&\tfrac32
  &\tfrac{1-u}{4}&-\tfrac{1+u}{4}
  &\tfrac{u-1}{4}&\tfrac{u+1}{4}\\
y&\tfrac u2&\tfrac u2&\tfrac u2&\tfrac u2
  &\tfrac{1+u}{4}&\tfrac{u-1}{4}
  &\tfrac{u+1}{4}&\tfrac{u-1}{4}
\end{array}
\endgroup
\]
The collinear triples are
$ABC,ABD,ACD,BCD,AEH,DFG$. The $11$ four-point circles are
$ABEF$, $ABGH$, $ACFG$, $ADEG$, $ADFH$, $BCEG$, $BCFH$, $BDEH$,
$CDEF$, $CDGH$, and $EFGH$. The remaining six circle blocks are
$ACE,ACH,BDF,BDG,BFG,CEH$.
A different eight-point configuration determining $17$ circles was
also reported by the forum user \texttt{mzn}~\cite{Forum506}.
The determinant criterion for
concyclicity, using $u^2=7$, verifies the incidences and the
distinctness of the circle equations.  Distinct circles share at most two points, so the listed circle blocks
account for disjoint sets of noncollinear triples. The checksum
$6+11\binom43+6=\binom83$ then confirms that the list is complete.
Thus the configuration determines seventeen circles. The
four-point circles are depicted in~\cref{fig:eight-rich}.
\end{proof}

\begin{figure}[htbp]
\centering

\begin{tikzpicture}[scale=1.02]
  \clip (-1.9,-1.17) rectangle (1.9,1.72);
  \eightcoordinates
  \draw[rich1] (-1,1) circle[radius={sqrt(3-sqrt(7))}];
  \draw[rich2] (1,1) circle[radius={sqrt(3-sqrt(7))}];
  \draw[rich3] (0,0) circle[radius=1];
  \eightpoints
\end{tikzpicture}
\begin{tikzpicture}[scale=1.02]
  \clip (-1.9,-.22) rectangle (1.9,1.72);
  \eightcoordinates
  \draw[rich1] (-1,-1) circle[radius={sqrt(3+sqrt(7))}];
  \draw[rich2] (1,-1) circle[radius={sqrt(3+sqrt(7))}];
  \eightpoints
\end{tikzpicture}
\begin{tikzpicture}[scale=1.02]
  \clip (-1.9,.15) rectangle (1.9,2.48);
  \eightcoordinates
  \draw[rich1] (-.5,{sqrt(7)/2}) circle[radius=1];
  \draw[rich2] (.5,{sqrt(7)/2}) circle[radius=1];
  \eightpoints
\end{tikzpicture}
\begin{tikzpicture}[scale=1.02]
  \clip (-1.9,-.48) rectangle (1.9,2.48);
  \eightcoordinates
  \draw[rich1] (0,{1+sqrt(7)})
    circle[radius={sqrt(5+sqrt(7))}];
  \draw[rich2] (0,{sqrt(7)-1})
    circle[radius={sqrt(5-sqrt(7))}];
  \draw[rich3] (0,{(sqrt(7)+1)/3})
    circle[radius={sqrt(5-sqrt(7))/3}];
  \draw[rich4] (0,{(sqrt(7)-1)/3})
    circle[radius={sqrt(5+sqrt(7))/3}];
  \eightpoints
\end{tikzpicture}

\caption{The eleven four-point circles of the eight-point extremal
configuration.}
\label{fig:eight-rich}
\end{figure}

\section{Proof for \texorpdfstring{$n\ge15$}{n >= 15}}\label{sec:n_large}

For a noncollinear real point set, write $t_j$ for its number of
$j$-point spanned lines. Melchior's inequality~\cite{Melchior} is
\begin{equation}\label{eq:melchior}
 \sum_{j\ge2}(3-j)t_j\ge3.
\end{equation}
We also use Langer's incidence bound~\cite{Langer}: if the set has $m$
points and at most $2m/3$ are collinear, then by~\cite[Thm.~1.1]{deZeeuw}
\begin{equation}\label{eq:langer}
\sum_{j\ge2}jt_j\ge\left\lceil\frac{m(m+3)}3\right\rceil.
\end{equation}

\begin{lemma}\label{lem:pointed}
Let $Q$ be a noncollinear set of $m$ real points, let $o\notin Q$, and
let $\cL$ be the set of lines spanned by $Q$. With $j_\ell=|\ell\cap Q|$,
\[
 \sum_{\substack{\ell\in\cL\\o\notin\ell}}\frac1{j_\ell+1}
 \ge \frac1{12}\sum_{\ell\in\cL}j_\ell-\frac m9+\frac12.
\]
\end{lemma}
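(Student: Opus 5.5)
The plan is to apply Melchior's inequality~\eqref{eq:melchior} twice, once to $Q$ and once to $Q\cup\{o\}$, and to combine the two with weights $1/3$ and $2/3$. Before that, a pointwise convexity bound converts $1/(j+1)$ into the linear weights appearing in Melchior's inequality. Langer's bound is not needed here.

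\emph{Step 1 (tangent-line bound).} For every integer $j\ge2$,
\[
 \frac1{j+1}\ \ge\ \frac16(3-j)+\frac1{12}\,j ,
\]
with equality at $j=2,3$. Indeed, the right-hand side is the chord of the convex function $1/(j+1)$ through $j=2$ and $j=3$, so it lies below the function at all other integers. Summing over the lines $\ell\in\cL$ with $o\notin\ell$ gives
\[
 \sum_{o\notin\ell}\frac1{j_\ell+1}\ \ge\ \frac16 S+\frac1{12}\sum_{o\notin\ell}j_\ell,
 \qquad\text{where } S:=\sum_{o\notin\ell}(3-j_\ell).
\]
Everything then reduces to a lower bound on $S$.

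\emph{Step 2 (two Melchior inequalities).} Write $\sum_o$ for a sum over the lines $\ell\in\cL$ through $o$, and $T_o$ for their number.

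Applied to $Q$ itself, Melchior gives
\[
 S\ \ge\ 3-\sum_o(3-j_\ell).
\]

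Now apply Melchior to $Q\cup\{o\}$, which is noncollinear because $Q$ is. Its spanned lines are of three kinds. The lines of $\cL$ avoiding $o$ keep their size. The lines of $\cL$ through $o$ gain one point. Finally, there are ordinary lines $o q$ for the $s$ points $q\in Q$ lying on no line of $\cL$ through $o$. Each point of $Q$ lies on exactly one line through $o$, so $s=m-\sum_o j_\ell$. Melchior then reads
\[
 S+\sum_o(2-j_\ell)+s\ \ge\ 3,
 \quad\text{that is,}\quad
 S\ \ge\ 3-m-2T_o+2\sum_o j_\ell .
\]

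\emph{Step 3 (combination).} Take the convex combination of the two bounds with weight $1/3$ on the first and $2/3$ on the second:
\[
 S\ \ge\ 3-\frac{2m}{3}-\sum_o\Bigl(\frac{3-j_\ell}{3}+\frac{2(2-2j_\ell)}{3}\Bigr)
 \ =\ 3-\frac{2m}3+\sum_o\frac{5j_\ell-7}{3}.
\]
Inserting this into Step 1 gives
\[
 \sum_{o\notin\ell}\frac1{j_\ell+1}\ \ge\ \frac12-\frac m9+\sum_o\frac{5j_\ell-7}{18}+\frac1{12}\sum_{o\notin\ell}j_\ell .
\]
To reach the claimed bound, it remains to check that each line through $o$ contributes at least $j_\ell/12$, so that $\sum_o j_\ell$ is paid for. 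This holds because
\[
 \frac{5j-7}{18}-\frac{j}{12}=\frac{7j-14}{36}\ \ge\ 0 \qquad\text{for } j\ge2 .
\]

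The main obstacle is the choice of weights in Step 3. Using only $Q\cup\{o\}$ costs $m/6$, which is too much when no line of $\cL$ passes through $o$. Using only $Q$ leaves the lines through $o$ uncontrolled. The weight $1/3$ is exactly what brings the $m$-loss down to $m/9$. One then has to confirm that the per-line inequality above still holds at this weight, and it does, with equality at $j=2$.
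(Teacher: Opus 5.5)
Your proof is correct and is essentially the paper's argument: the same chord bound $\frac{1}{j+1}\ge\frac{6-j}{12}$, and the same two Melchior inequalities (for $Q$ and for $Q\cup\{o\}$) with the same effective weights $\frac1{18}$ and $\frac19$. The only difference is the bookkeeping for lines through $o$: the paper adds and subtracts them using $\frac{6-j}{12}\le\frac{1}{j+1}\le\frac{j+1}{9}$, whereas you check the equivalent per-line inequality $\frac{j+1}{9}-\frac{6-j}{12}=\frac{7j-14}{36}\ge 0$ at the end.
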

\begin{proof}
Put $I=\sum_{\ell\in\cL}j_\ell$,
$D=\sum_{\ell\in\cL}(3-j_\ell)$ and
$R=\sum_{\ell\in\cL,\,o\in\ell}(j_\ell+1)$.
Melchior's inequality on $Q$ gives $D\ge3$. Adjoining $o$ decreases the contribution
of each old line through $o$ by one. The lines through $o$ that
already contain two or more points of $Q$ partition those points into
disjoint subsets, and every other line joining $o$ to a point of $Q$
is new and ordinary. Thus $m-\sum_{o\in\ell}j_\ell$ ordinary
lines are created. Melchior on $Q\cup\{o\}$ gives $D+m-R\ge3$.
For every integer $j\ge2$,
$
 \frac1{j+1}-\frac{6-j}{12}
 =\frac{(j-2)(j-3)}{12(j+1)}\ge0$ and $\frac1{j+1}\le\frac{j+1}{9}.
$
Subtracting the weights of the lines through $o$ therefore yields
\begin{align*}
 \sum_{\substack{\ell\in\cL\\o\notin\ell}}\frac1{j_\ell+1}
 &=\sum_{\ell\in\cL}\frac1{j_\ell+1}
   -\sum_{\substack{\ell\in\cL\\o\in\ell}}\frac1{j_\ell+1}\\
 &\ge\sum_{\ell\in\cL}\frac{6-j_\ell}{12}
   -\sum_{\substack{\ell\in\cL\\o\in\ell}}\frac{j_\ell+1}{9}
  =\frac{I+2D}{12}-\frac R9\\
 &\ge\frac I{12}+\frac D{18}-\frac m9+\frac13
  \ge\frac I{12}-\frac m9+\frac12.
\end{align*}
Here $\sum_{\ell\in\cL}(6-j_\ell)=I+2D$ because
$3|\cL|=I+D$; we used $R\le D+m-3$ and then $D\ge3$.
\end{proof}

\begin{proof}[Proof for $n\ge15$]

The upper bound comes from a circle with $n-1$ selected points and its
center, as depicted in~\cref{fig:optimalConf} (rightmost) and explained before. 

Suppose first that a line or circle contains $k>n/2$ points, and put
$r=n-k\ge1$. The counting argument in~\eqref{eq:carrier-count} is valid
for every $n$, hence
\begin{equation}\label{eq:carrier}
 C(P)\ge 1+r\binom{k}{2}-\binom{r+1}{2}
 \left\lfloor\frac{k}{2}\right\rfloor
 \ge H_n(k):=1+\frac{(n-k)k(3k-n-3)}4.
\end{equation}
Put $G(n)=1+(n-1)(n-3)/2$, so that $g(n)=\lceil G(n)\rceil$.
On $[(n+1)/2,n-1]$ we have $H_n''(k)=(8n+6-18k)/4<0$, while
\[
 H_n(n-1)=G(n),\qquad
 H_n\!\left(\frac{n+1}{2}\right)-G(n)
 =\frac{(n-1)(n-3)(n-15)}{32}\ge0.
\]
Concavity and integrality give $C(P)\ge g(n)$.
We may now assume that every line and circle contains at most $n/2$
points of $P$. Invert $P\setminus\{p\}$ about each $p\in P$, obtaining
$Q_p$ of order $m=n-1$. It is noncollinear, since otherwise all of $P$
would lie on one line or circle. A $j$-point line of $Q_p$ avoiding $p$
corresponds to a circle containing exactly $j+1$ original points,
one of them $p$. Consequently the sum, over all $p$, of the left side
in Lemma~\ref{lem:pointed} is exactly $C(P)$.
Every line of $Q_p$ has at most $n/2-1\le2m/3$ points, so
\eqref{eq:langer} and the lemma give
\[
 C(P)\ge\frac n{36}
 \left(3\left\lceil\frac{(n-1)(n+2)}3\right\rceil-4n+22\right).
\]
At $n=15$ this is $505/6>84=g(15)-1$, so integrality suffices.
For $n\ge16$, dropping the ceiling and using $g(n)\le1+(n-2)^2/2$ gives
\[
 C(P)-g(n)\ge
 \frac{n(n^2-3n+20)}{36}-\left(1+\frac{(n-2)^2}{2}\right)
 =\frac{n(n-16)(n-5)+12(n-9)}{36}>0.
\]
This completes the lower bound.
\end{proof}


\section*{Tool and computational resource disclosure}

The mathematical investigations involved AI-assisted exploration and
checking, including use of OpenAI ChatGPT and Codex. GPT-6
assisted in comparing and streamlining the exposition, and Claude
was used to review computations, references and phrasing.
The authors checked and revised the mathematical arguments. A Lean~4 formalization is cited above.


\begingroup
\raggedright

\endgroup

\end{document}